\setlist[enumerate]{nosep}
\definecolor{labelkey}{rgb}{0,0.08,0.45}
\definecolor{refkey}{rgb}{0,0.6,0.0}
\definecolor{Brown}{rgb}{0.45,0.0,0.05}
\definecolor{lime}{rgb}{0.00,0.8,0.0}
\definecolor{lblue}{rgb}{0.5,0.5,0.99}
\colorlet{hlcyan}{cyan!30}
\def\namedlabel#1#2{\begingroup
   \def\@currentlabel{#2}%
   \label{#1}\endgroup
}
\newcommand{\seppthree}{\setlength{\itemsep}{-3pt}}
\newcommand{\nnn}{\ensuremath{{n\in{\mathbb N}}}}
\newcommand{\fenv}[1]%
{\ensuremath{\,\overrightarrow{\operatorname{env}}_{#1}}}
\newcommand{\benv}[1]%
{\ensuremath{\,\overleftarrow{\operatorname{env}}_{#1}}}
\newcommand{\scal}[2]{\left\langle{#1},{#2}  \right\rangle}
\newcommand{\RR}{\ensuremath{\mathbb R}}
\newcommand{\RX}{\ensuremath{\,\left]-\infty,+\infty\right]}}
\newcommand{\NN}{\ensuremath{\mathbb N}}
\newcommand{\ran}{\ensuremath{{\operatorname{ran}}\,}}
\newcommand{\cconv}{\ensuremath{\overline{\operatorname{conv}}\,}}
\newcommand{\cspan}{\ensuremath{\overline{\operatorname{span}}\,}}
\newcommand{\Id}{\ensuremath{\operatorname{Id}}}
\newcommand{\whs}{\text{(whs)~}}
\crefname{equation}{}{equations}
\crefname{chapter}{Appendix}{chapters}
\crefname{item}{}{items}
\crefname{enumi}{}{}
\newtheorem{theorem}{Theorem}[section]
\newtheorem{lemma}[theorem]{Lemma}
\newtheorem{corollary}[theorem]{Corollary}
\newtheorem{proposition}[theorem]{Proposition}
\newtheorem{definition}[theorem]{Definition}
\newtheorem{example}[theorem]{Example}
\newtheorem{remark}[theorem]{Remark}
\providecommand{\abs}[1]{\lvert#1\rvert}
\providecommand{\norm}[1]{\lVert#1\rVert}
\providecommand{\RR}{\mathbb{R}}
\providecommand{\ran}{\operatorname{ran}}
\providecommand{\Sign}{\operatorname{Sign}}
\providecommand{\sign}{\operatorname{sign}}
\providecommand{\Id}{\operatorname{{ Id}}}
\providecommand{\NN}{\mathbb{N}}
\providecommand{\BB}[2]{\operatorname{ball}(#1;#2)}
\providecommand{\ran}{\operatorname{ran}}
\providecommand{\Id}{\operatorname{Id}}
\providecommand{\J}{{\,\operatorname{J}}}
\newcommand{\cran}{\ensuremath{\overline{\operatorname{ran}}\,}}
\providecommand{\RR}{\mathbb{R}}
\providecommand{\NN}{\mathbb{N}}
\definecolor{myblue}{rgb}{.8, .8, 1}
  \newcommand*\mybluebox[1]{%
    \colorbox{myblue}{\hspace{1em}#1\hspace{1em}}}
\begin{document}
%-------------------------------------------------------------------------

%\tikzstyle{decision} = [diamond, draw, fill=blue!50]
%\tikzstyle{line} = [draw, -stealth, thick]
%\tikzstyle{elli}=[draw, ellipse, fill=red!50,minimum height=8mm, text width=5em, text centered]
%\tikzstyle{block} = [draw, rectangle, fill=blue!50, text width=8em, text centered, minimum height=15mm, node distance=10em]
%

\author{
Heinz H.\ Bauschke\thanks{
Mathematics, University
of British Columbia,
Kelowna, B.C.\ V1V~1V7, Canada. E-mail:
\texttt{heinz.bauschke@ubc.ca}.},~ 
Walaa M.\ Moursi\thanks{
  Department of Electrical Engineering,
  Stanford University,
  350 Serra Mall, Stanford, CA 94305,
  USA.
  %and
  %Mansoura University, Faculty of Science,
  %Mathematics Department,
  %Mansoura 35516, Egypt.
  E-mail: \texttt{wmoursi@stanford.edu}.}
~and~
Xianfu Wang\thanks{
Mathematics, University
of British Columbia,
Kelowna, B.C.\ V1V~1V7, Canada. E-mail:
\texttt{shawn.wang@ubc.ca}.}
}

\title{\textsf{
Maximally monotone operators with ranges\\
whose closures are not convex and an
answer\\ to a recent question by Stephen Simons 
}
}

\date{September 6, 2019}

\maketitle

\begin{abstract}
In his recent \emph{Proceedings of the AMS} paper
``Gossez's skew linear map and its pathological 
maximally monotone multifunctions'',
Stephen Simons proved that the closure of the
range of the sum of the Gossez operator and a multiple
of the duality map is not convex whenever the scalar
is between $0$ and $4$. The problem of the convexity of
that range when the scalar is equal to $4$ was explicitly
stated. In this paper, we answer this question in the negative
for any scalar greater than or equal to $4$.
We derive this result from an abstract framework that allows us
to also obtain a corresponding result for the Fitzpatrick-Phelps
integral operator. 
\end{abstract}
{ 
%\small
\noindent
{\bfseries 2010 Mathematics Subject Classification:}
{Primary 
47H05, 
Secondary 
46B20, 
47A05 
}

\noindent {\bfseries Keywords:}
Duality map, 
Fitzpatrick-Phelps operator,
Gossez operator, 
maximally monotone operator,
range,
skew operator
}

\section{Introduction}

Throughout, we assume that 
\begin{empheq}[box=\mybluebox]{equation}
\text{$X$ is
a real Banach space with dual pairing
$\scal{\cdot}{\cdot}\colon X\times X^*\to\RR$, }
\end{empheq}
where $X^*$ is the dual space of $X$. 
The 
\emph{duality mapping} $\J$ of $X$ is the subdifferential operator of
the function $\tfrac{1}{2}\|\cdot\|^2\colon X\to\RR$; it satisfies
\begin{equation}
(\forall (x,x^*)\in X\times X^*)\quad 
x^*\in \J x \; \Leftrightarrow \; \scal{x}{x^*}=\|x\|^2 = \|x^*\|^2.
\label{e:J}
\end{equation}
Now let $A\colon X\to X^*$ be a bounded linear monotone operator, i.e.,
$(\forall x\in X)(\forall y\in X)$ $\scal{x-y}{Ax-Ay} \geq 0$. 
Then both $A$ and $\J$ are maximally monotone, and so is their sum $A+\J$ 
thanks to a result by Heisler (see, e.g., \cite[Theorem~40.4]{Simons1}).
If $X$ is a Hilbert space, then $\J = \Id$ and it is well known
that the range $\ran(A+\lambda\J)$ is equal to $X$.
In striking contrast, it was shown very recently by Simons
that for the so-called Gossez operator $G$ which acts on $\ell_1$,  we have
that $\cran(G+\lambda\J)$ is \emph{not convex} for $0<\lambda<4$
(see Section~\ref{sec:Gossez} below.)
It is also known that for the so-called Fitzpatrick-Phelps operator $F$,
which acts on $L_1[0,1]$, the set $\cran(F+1\J)$ is \emph{not convex}.

In this paper, we unify these results by providing an abstract result
that allows us to deduce that no matter how $\lambda>0$ is chosen,
neither $\cran(F+\lambda\J)$ nor $\cran(G+\lambda\J)$ is convex.
This provides a negative answer to Simons's \cite[Problem~3.6]{Simonsnew}. 

Let us note here that neither $F$ nor $G$ is a subdifferential operator of a convex function;
indeed, if $f\colon X\to\RX$ is convex, lower semicontinuous, and proper, 
then $f+\lambda\tfrac{1}{2}\|\cdot\|^2$ is supercoercive\footnote{Recall that $g\colon X\to\RX$ is supercoercive if $\lim_{\|x\|\to+\infty}g(x)/\|x\|=+\infty$.} and hence 
$\cran\partial(f+\lambda\tfrac{1}{2}\|\cdot\|^2) = 
\cran(\partial f+\lambda\J ) = X^*$ by Gossez's \cite[Corollaire~8.2]{Gossez71}. 

The remainder of this note is organized as follows.
In Section~\ref{sec:aux} we collect some auxiliary results for 
later use. Section~\ref{sec:main} contains our main abstract results.
Section~\ref{sec:Gossez} discusses the Gossez operator while
Section~\ref{sec:FP} deals with the Fitzpatrick-Phelps operator. 

\section{Auxiliary results}
\label{sec:aux}

In this section, we cover some technical results that will ease 
the proofs in subsequent sections. 

\subsection*{Rugged Banach spaces}

\begin{definition}
(See also \cite{BauThesis}.)
We say that $X$ is \emph{rugged} if
\begin{equation}
\label{e:def:rugged}
\cspan\ran(\J-\J ) = X^*. 
\end{equation}
\end{definition}

\begin{example}
\label{ex:rugged}
(See \cite[Remark~5.6]{BauBor99}
and also
\cite[Section~2.3.4]{BauThesis}.)
The following Banach spaces are rugged:
\begin{enumerate*}
\item
\label{ex:rugged:i}
$\ell_1$ and 
\item
\label{ex:rugged:v}
$L_1[0,1]$.  
\end{enumerate*}
\end{example}

\begin{proof}
First, we define $(\forall \xi\in \RR)$ $\Sign  (\xi)=\{\sign (\xi)\}$, if $\xi\neq 0$; 
and $\Sign (\xi)=\left[-1,1\right]=\partial|\cdot|(0)$, otherwise.

\ref{ex:rugged:i}:
Let $x=(x_n)_{n\in \NN}\in \ell_1$, where 
$\NN = \{1,2,\ldots\}$. 
We start by proving the following claim:
\begin{equation}
\label{e:July:e6}
(\forall \nnn)
\quad
(\J x)_n
=\norm{x}_1\Sign (x_n).
\end{equation}  
Indeed, 
let $y=(y_n)_{\nnn}\in\ell_\infty$.
If 
$(\forall \nnn)$ 
$y_n\in \norm{x}_1\Sign(x_n)$, then 
$\norm{x}_1^2=\norm{y}_\infty^2=\scal{x}{y}$ and
hence $y\in \J x$.
Conversely, suppose that $y\in \J x$.
Then 
\begin{equation}
\label{e:July:e3}
(\forall\nnn)
\quad
\abs{y_n}\le \norm{y}_\infty=\norm{x}_1
\end{equation}
and 
\begin{subequations}
\label{e:July:e4}
\begin{align}
\norm{x}_1^2
&=\norm{y}^2_\infty
=\norm{x}_1\norm{y}_\infty
=\scal{x}{y}
=\sum_{\nnn}x_n y_n\\
&\le \sum_{\nnn}\abs{x_n} \abs{y_n}
\le \sum_{\nnn}\abs{x_n} \norm{y}_\infty
=\norm{x}_1\norm{y}_\infty.
\end{align}
\end{subequations}
Hence $\sum_{\nnn} \abs{x_n}(\norm{y}_\infty-\abs{y_n})=0 $; equivalently,
$(\forall \nnn)$
$ \abs{x_n}(\norm{y}_\infty-\abs{y_n})=0$. 
Now, if $x_n=0$ then \eqref{e:July:e3} implies that
$y_n\in\norm{x}_1\Sign (x_n)$.
Alternatively, if $x_n\neq 0$ we must have 
$\abs{y_n}=\norm{y}_\infty$
which, in view of  \eqref{e:July:e4}, 
implies 
that $y_n=
\sign (x_n)\abs{y_n}
=\sign (x_n)\norm{y}_\infty
\in\Sign (x_n) \norm{x}_1$. 
Next, we show that $\ell_1$ satisfies \eqref{e:def:rugged}.
Clearly, $\{1,2\}\subseteq\NN$,
and we denote the corresponding canonical unit vectors in $\ell_1$
by $e_1$ and $e_2$, respectively. 
If $i\in\{1,2\}$ and $n\in\NN\smallsetminus\{i\}$, then 
\eqref{e:July:e6} yields 
$(\J {e_i})_i=\{1\}$ and $(\J {e_i})_n = [-1,1]$; consequently, 
$(\J {e_i}-\J {e_i})_i=\{0\}$ and $(\J {e_i} - \J{e_i})_n = [-2,2]$. 
Therefore, if $i\in\{1,2\}$ and $n\in\NN\smallsetminus\{1,2\}$, then 
$(\J {e_1}-\J {e_1} + \J e_2 - \J e_2)_i=[-2,2]$ while 
$(\J {e_1}-\J {e_1} + \J e_2 - \J e_2)_n=[-4,4]$.
It follows that $(\forall \nnn)$
$[-2,2]\subseteq (\J {e_1}-\J {e_1} + \J e_2 - \J e_2)_n$, which in turn
implies \eqref{e:def:rugged}.

\ref{ex:rugged:v}:
The proof parallels that of item~\ref{ex:rugged:i};
for completeness, we provide the details. 
Let $x\in L_1[0,1]$. 
We first claim that 
\begin{equation}
\label{e:July:e6L}
(\forall t\in [0,1])
\quad
(\J x)(t)
=\norm{x}_1\Sign (x(t))\;\;\text{almost everywhere (a.e.). }
\end{equation}  
Indeed, 
let $y=L_\infty[0,1]$.
If 
$(\forall t\in[0,1])$ 
$y(t)\in \norm{x}_1\Sign(x(t))$ a.e., then 
$\norm{x}_1^2=\norm{y}_\infty^2=\scal{x}{y}$ and
hence $y\in \J x$.
Conversely, suppose that $y\in \J x$.
Then 
\begin{equation}
\label{e:July:e3L}
(\forall t\in [0,1])
\quad
\abs{y(t)}\le \norm{y}_\infty=\norm{x}_1 \;\;\text{a.e.}
\end{equation}
and 
\begin{subequations}
\label{e:July:e4L}
\begin{align}
\norm{x}_1^2
&=\norm{y}^2_\infty
=\norm{x}_1\norm{y}_\infty
=\scal{x}{y}
= \int_{0}^{1} x(t)y(t)\,dt\\
&\le 
\int_{0}^{1} \abs{x(t)}\abs{y(t)}\,dt 
\le \int_{0}^{1} \abs{x(t)} \norm{y}_\infty \, dt
=\norm{x}_1\norm{y}_\infty.
\end{align}
\end{subequations}
Hence 
$\int_{0}^{1}\abs{x(t)}(\norm{y}_\infty-\abs{y(t)})\,dt = 0$; 
equivalently, 
$(\forall t\in [0,1])$
$ \abs{x(t)}(\norm{y}_\infty-\abs{y(t)})=0$ a.e.
Now if $x(t)=0$, then \eqref{e:July:e3L} implies that
$y(t)\in\norm{x}_1\Sign (x(t))$.
Alternatively, if $x(t)\neq 0$, then we must have 
$\abs{y(t)}=\norm{y}_\infty$
which, in view of  \eqref{e:July:e4L}, 
implies 
that $y(t)=
\sign (x(t))\abs{y(t)}
=\sign (x(t))\norm{y}_\infty
\in\Sign (x(t)) \norm{x}_1$ a.e. 
It remains to show that $L_1[0,1]$ satisfies \eqref{e:def:rugged}.
Set $A_1 = [0,1/3]$ and $A_2 = [2/3,1]$
and also $e_1 = 3\chi_{A_1}$
and $e_2 = 3\chi_{A_2}$, where 
$\chi_B$ denotes the characteristic function of a subset $B$ of $[0,1]$.
If $i\in\{1,2\}$, $s\in A_i$ and $t\in[0,1]\smallsetminus A_i$, then
$\norm{e_i}_1=1$ and 
\eqref{e:July:e6L} yields 
$(\J {e_i})(s)=\{1\}$ and $(\J {e_i})(t) = [-1,1]$; 
consequently, 
$(\J {e_i}-\J {e_i})(s)=\{0\}$ and $(\J {e_i} - \J{e_i})(t) = [-2,2]$. 
Therefore, if $s\in A := A_1\cup A_2$ and $t \in[0,1]\smallsetminus A$, then 
$(\J {e_1}-\J {e_1} + \J e_2 - \J e_2)(s)=[-2,2]$ while 
$(\J {e_1}-\J {e_1} + \J e_2 - \J e_2)(t)=[-4,4]$.
It follows that $(\forall t\in [0,1])$
$[-2,2]\subseteq (\J {e_1}-\J {e_1} + \J e_2 - \J e_2)(t)$ a.e., 
which in turn implies \eqref{e:def:rugged}. 
\end{proof}

\begin{proposition}
\label{p:Shawn}
Let $X$ be rugged, 
let $A\colon X\to X^*$ be a linear operator,
and let $\lambda > 0$. 
Then 
$\cconv\cran(A+\lambda\J ) = X^*$.
\end{proposition}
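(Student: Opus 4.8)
The plan is to show that the set $C:=\cconv\cran(A+\lambda\J)$ is in fact a \emph{closed convex cone} that already contains $\cspan\ran(\J-\J)$; since the latter equals $X^*$ by ruggedness and the reverse inclusion $C\subseteq X^*$ is automatic, this forces $C=X^*$. The two ingredients driving everything are the positive homogeneity and oddness of the duality map, both of which read off immediately from the characterization \eqref{e:J}: for every $x\in X$ one has $\J(tx)=t\,\J x$ when $t>0$, and $\J(-x)=-\J x$.

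First I would record the cone structure. Writing $R:=\ran(A+\lambda\J)$, if $Ax+\lambda x^*\in R$ with $x^*\in\J x$ and $t>0$, then $tx^*\in\J(tx)$, so
\[
t(Ax+\lambda x^*)=A(tx)+\lambda(tx^*)\in R ,
\]
and also $0\in R$ since $\J 0=\{0\}$. Thus $R$ is a cone, $\conv R$ is a convex cone, and $C=\cconv R$ is a closed convex cone; here I use the elementary fact that closed convex hulls are insensitive to first closing the set, so that $\cconv\cran(A+\lambda\J)=\cconv R$.

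The heart of the argument is to plant the self-differences of $\J$ inside $C$. Fix $x\in X$ and $a^*,b^*\in\J x$. Then $Ax+\lambda a^*\in R$; applying oddness, $-b^*\in\J(-x)$, so $A(-x)+\lambda(-b^*)=-(Ax+\lambda b^*)\in R$ as well. Averaging these two members of $R$ gives
\[
\tfrac12\big(Ax+\lambda a^*\big)+\tfrac12\big({-}(Ax+\lambda b^*)\big)=\tfrac{\lambda}{2}(a^*-b^*)\in\conv R\subseteq C .
\]
Letting $a^*,b^*$ range over $\J x$ and $x$ over $X$ yields $\tfrac{\lambda}{2}S\subseteq C$, where $S:=\ran(\J-\J)$. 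Finally I would upgrade this to the full span. Since $\lambda/2>0$ and $C$ is a cone, $\tfrac{\lambda}{2}S\subseteq C$ already gives $\cone(S)\subseteq C$. Because $S=-S$ (as $a^*-b^*\in\J x-\J x$ iff $b^*-a^*\in\J x-\J x$), the convex cone $\conv(\cone S)$ is symmetric, hence a linear subspace, and since it contains $S$ it equals $\spn S$. Taking closures and using that $C$ is closed and convex gives $\cspan S=\cconv(\cone S)\subseteq C$, and by ruggedness $\cspan S=\cspan\ran(\J-\J)=X^*$. Therefore $C=X^*$.

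The only genuinely nontrivial point — and the step I expect to be the main obstacle — is the passage from the ``width'' information $\tfrac{\lambda}{2}(a^*-b^*)\in C$ to filling out the entire span. This is precisely where the cone structure of $R$ (equivalently, the positive homogeneity of $\J$) is indispensable: a bounded symmetric neighbourhood of the origin sitting inside a closed convex set need not reach far into $X^*$, whereas the homogeneity lets the symmetric set $\tfrac{\lambda}{2}S$ be dilated arbitrarily and hence generate the whole closed span. Everything else is routine bookkeeping with \eqref{e:J}.
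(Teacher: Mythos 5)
Your proof is correct, but it takes a genuinely different route from the paper's. The paper argues by contradiction via the separation theorem: if some point lay outside $\cconv\cran(A+\lambda\J)$, a separating functional $x^{**}\in X^{**}\smallsetminus\{0\}$ would be bounded above on $\ran(A+\lambda\J)$; since that range is a balanced cone, $x^{**}$ must annihilate it, so $\scal{Ax+\lambda\J x}{x^{**}}=0$ for every $x$, and then single-valuedness of $A$ together with $\lambda\neq 0$ forces $\scal{\J x-\J x}{x^{**}}=0$, whence $x^{**}$ kills $\cspan\ran(\J-\J)=X^*$ --- absurd. You instead work entirely on the primal side: you show $C=\cconv\cran(A+\lambda\J)$ is a closed convex cone, plant $\tfrac{\lambda}{2}(\J x-\J x)$ inside it by averaging $Ax+\lambda a^*$ with $-(Ax+\lambda b^*)=A(-x)+\lambda(-b^*)$, and then use the fact that a symmetric convex cone is a linear subspace to capture all of $\cspan\ran(\J-\J)$. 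Your averaging step is exactly the primal shadow of the paper's cancellation $\scal{(Ax+\lambda a^*)-(Ax+\lambda b^*)}{x^{**}}=\lambda\scal{a^*-b^*}{x^{**}}$; both arguments rest on the same structural facts (linearity of $A$, positive homogeneity and oddness of $\J$, the latter two being what makes the range balanced). What your version buys is self-containedness: no Hahn--Banach separation, no excursion into $X^{**}$, only elementary manipulations of cones, hulls, spans and closures. What the paper's version buys is brevity: separation collapses your three steps into a few lines. Both proofs correctly use only linearity of $A$ --- no boundedness or monotonicity --- as the statement requires.
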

\begin{proof}
We prove this by contradiction and thus assume
that there exists
$x^*\in X^*\smallsetminus (\cconv\cran(A+\lambda\J ))$. 
The separation theorem yields $x^{**} \in X^{**}\smallsetminus\{0\}$ 
such that  $\scal{x^*}{x^{**}} > \sup\scal{\cconv\cran(A+\lambda\J )}{x^{**}}
\geq \sup\scal{\ran(A+\lambda\J)}{x^{**}}$. 
Because $\ran(A+\lambda\J )$ is a balanced cone (i.e., closed under scalar multiplication),
 we deduce that 
$(\forall x\in X)$ 
$\scal{Ax+\lambda \J x}{x^{**}} = 0$. 
Because $A$ is single-valued and $\lambda\neq 0$, it follows that 
$(\forall x\in X)$ 
$\scal{\J x - \J x}{x^{**}} = 0$. 
Therefore, $\scal{X^*}{x^{**}} = \scal{\cspan\ran(\J -\J)}{x^{**}} = 0$
and thus $x^{**}=0$ which is absurd.
\end{proof}

\begin{corollary}
\label{c:thesis}
Let $X$ be rugged, let $A\colon X\to X^*$ be a linear operator,
and let $\lambda > 0$.
Then the following are equivalent:
\begin{enumerate}
\item 
\label{c:thesis1}
$\cran(A+\lambda\J) = X^*$;
\item 
\label{c:thesis2}
$\cran(A+\lambda\J)$ is a subspace;
\item 
\label{c:thesis3}
$\cran(A+\lambda\J)$ is a convex set. 
\end{enumerate}
\end{corollary}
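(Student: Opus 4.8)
The plan is to establish the cycle (i)$\Rightarrow$(ii)$\Rightarrow$(iii)$\Rightarrow$(i). The first two implications are immediate from the definitions: the whole space $X^*$ is in particular a subspace, and every subspace is a convex set. Hence the entire content of the corollary lies in the implication (iii)$\Rightarrow$(i), which is where Proposition~\ref{p:Shawn} does the work.

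To prove (iii)$\Rightarrow$(i), I would set $S:=\cran(A+\lambda\J)$ and use that $S$ is closed by the very presence of the closure in the notation $\cran$. Under the hypothesis that $S$ is convex, $S$ is thus a closed convex set and therefore coincides with its own closed convex hull, i.e.\ $\cconv S=S$; indeed $\cconv S$ is by definition the smallest closed convex set containing $S$, and $S$ itself is already such a set. On the other hand, Proposition~\ref{p:Shawn} states exactly that $\cconv S=\cconv\cran(A+\lambda\J)=X^*$. Comparing the two identities gives $S=X^*$, which is precisely (i), and the cycle is closed.

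It is worth recording why (ii) is a natural intermediate condition rather than a gratuitous one. As already noted in the proof of Proposition~\ref{p:Shawn}, $\ran(A+\lambda\J)$ is a balanced cone, and this property is inherited by its closure $S$ because scalar multiplication is continuous. A closed balanced cone that happens to be convex is automatically closed under addition and negation, hence a (closed) subspace, so for sets of this particular shape (ii) and (iii) coincide; the role of ruggedness, through Proposition~\ref{p:Shawn}, is solely to upgrade ``subspace'' to ``all of $X^*$''. The main point to watch — and essentially the only subtlety — is that the closure built into $\cran$ supplies closedness of $S$ for free, so that convexity \emph{alone} already forces $\cconv S=S$; no separate closedness assumption on the range is required.
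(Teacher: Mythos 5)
Your proof is correct and follows essentially the same route as the paper: the implications \ref{c:thesis1}$\Rightarrow$\ref{c:thesis2}$\Rightarrow$\ref{c:thesis3} are dismissed as immediate, and \ref{c:thesis3}$\Rightarrow$\ref{c:thesis1} is obtained by noting that a closed convex set equals its closed convex hull and invoking Proposition~\ref{p:Shawn}. Your closing paragraph on balanced cones is a correct but optional aside; the paper does not include it.
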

\begin{proof}
(See also \cite[Proposition~15.3.8]{BauThesis} for ``\ref{c:thesis1}$\Leftrightarrow$\ref{c:thesis3}''.)
The implications ``\ref{c:thesis1}$\Rightarrow$\ref{c:thesis2}''
and ``\ref{c:thesis2}$\Rightarrow$\ref{c:thesis3}'' are clear.
``\ref{c:thesis3}$\Rightarrow$\ref{c:thesis1}'':
Assume that $\cran(A+\lambda\J)$ is convex.
Then $\cconv\cran(A+\lambda\J)=\cran(A+\lambda\J)$.
On the other hand, by Proposition~\ref{p:Shawn}, $\cconv\cran(A+\lambda\J)=X^*$.
Altogether, we deduce that $\cran(A+\lambda\J)=X^*$. 
\end{proof}

\section{Main result}
\label{sec:main}

\begin{lemma} \label{l:over}
Let $A\colon X\to X^*$ be a bounded linear monotone operator. 
Let $\lambda > 0$ and let $r^*\in X^*$.
Suppose that $x\in X$ satisfies 
\begin{equation}
r^* \in Ax+\lambda \J x. 
\end{equation}
Then 
\begin{equation}
\label{e:over2}
\frac{\|r^*\|}{\|A\|+\lambda} \leq \|x\| \leq \frac{\|r^*\|}{\lambda}
\quad \text{and}\quad
\lambda\|x\|^2 \leq \scal{x}{r^*}, 
\end{equation}
and\footnote{Recall that $A\colon X\to X^*$ is skew
if any of the following equivalent conditions holds:
(i) $A^*|_X = -A$;
(ii)
$(\forall x\in X)(\forall y\in X)$ $\scal{x}{Ay} = -\scal{y}{Ax}$;
(iii) $\pm A$ are monotone;
(iv) $(\forall x\in X)$ $\scal{x}{Ax} = 0$.
}

\begin{equation}
\label{e:over3}
\text{$A$ is skew}
\quad\Rightarrow\quad
\lambda\|x\|^2 = \scal{x}{r^*}.
\end{equation}
\end{lemma}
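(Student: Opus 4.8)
The plan is to convert the set-valued inclusion into an equation with an explicit selection. Since $r^* \in Ax+\lambda\J x$ and $A$ is single-valued, there is some $j^*\in\J x$ with $r^* = Ax+\lambda j^*$. The key tool throughout is the characterization \eqref{e:J}: from $j^*\in\J x$ we obtain $\scal{x}{j^*} = \|x\|^2 = \|j^*\|^2$, and in particular $\|j^*\| = \|x\|$. Both the inequalities in \eqref{e:over2} and the skew identity \eqref{e:over3} will follow from pairing the equation $r^* = Ax+\lambda j^*$ with $x$.

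Next I would establish the scalar identity that underlies both the right-hand claim of \eqref{e:over2} and the whole of \eqref{e:over3}. Pairing $r^* = Ax+\lambda j^*$ with $x$ gives
\[
\scal{x}{r^*} = \scal{x}{Ax} + \lambda\scal{x}{j^*} = \scal{x}{Ax} + \lambda\|x\|^2 .
\]
Monotonicity of the linear operator $A$, applied with $y=0$ and using $A0=0$, yields $\scal{x}{Ax}\geq 0$, whence $\lambda\|x\|^2 \leq \scal{x}{r^*}$, which is the last assertion of \eqref{e:over2}. If moreover $A$ is skew, then condition (iv) of the footnote gives $\scal{x}{Ax} = 0$, so the preceding inequality sharpens to the equality $\lambda\|x\|^2 = \scal{x}{r^*}$ claimed in \eqref{e:over3}.

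For the two norm bounds I would argue as follows. The upper bound comes from combining the just-proved $\lambda\|x\|^2\leq\scal{x}{r^*}$ with the dual-pairing estimate $\scal{x}{r^*}\leq\|x\|\,\|r^*\|$; when $x\neq 0$ we divide by $\|x\|$ to get $\lambda\|x\|\leq\|r^*\|$, i.e.\ $\|x\|\leq\|r^*\|/\lambda$ (the case $x=0$ being trivial). The lower bound is more direct and uses only the equation $r^* = Ax+\lambda j^*$: the triangle inequality together with $\|Ax\|\leq\|A\|\,\|x\|$ and $\|j^*\| = \|x\|$ gives $\|r^*\|\leq(\|A\|+\lambda)\|x\|$, that is, $\|r^*\|/(\|A\|+\lambda)\leq\|x\|$.

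I do not expect a genuine obstacle here; the entire argument reduces to correctly extracting the selection $j^*$ from the inclusion and then exploiting the two facts $\scal{x}{j^*}=\|x\|^2$ and $\|j^*\|=\|x\|$ furnished by \eqref{e:J}, plus monotonicity (resp.\ skewness) to control the term $\scal{x}{Ax}$. The only step needing a brief word of care is the division by $\|x\|$ in the upper bound, which I dispose of by treating $x=0$ as a separate trivial case.
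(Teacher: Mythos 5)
Your proof is correct and follows essentially the same route as the paper's: extract the selection $j^*\in\J x$, use the triangle inequality with $\|j^*\|=\|x\|$ for the lower bound, and use monotonicity (resp.\ skewness) of $A$ together with $\scal{x}{r^*}\leq\|x\|\,\|r^*\|$ for the remaining inequalities and the equality. Your explicit handling of the $x=0$ case and the observation $A0=0$ are minor points the paper leaves implicit, but the argument is the same.
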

\begin{proof}
Let $x^*\in \J x$ satisfy $r^* = Ax + \lambda x^*$. 
Combining \eqref{e:J} and the triangle inequality yields
$\lambda\|x\| = \lambda\|x^*\|=\|r^*-Ax\| \geq \|r^*\|-\|Ax\| \geq \|r^*\|-\|A\|\|x\|$. 
Hence $(\lambda+\|A\|)\|x\|\geq\|r^*\|$ which implies the first inequality in \eqref{e:over2}. 
On the other hand, from \eqref{e:J} and the monotonicity of $A$, we deduce that 
$\lambda \|x\|^2 = \lambda\scal{x}{x^*} \leq \scal{x}{\lambda x^* + Ax} 
= \scal{x}{r^*}\leq \|x\|\|r^*\|$, which yields the remaining inequalities in \eqref{e:over2}
as well as \eqref{e:over3}.
\end{proof}

\begin{theorem}
\label{t:tuesday}
Let $A\colon X\to X^*$ be a bounded linear monotone operator. 
Let $\lambda > 0$ and $\varepsilon\geq 0$. 
Suppose that $x\in X$ and $f^*\in X^*$ satisfy\footnote{We denote 
the closed ball centered at $f^*$ of radius $\varepsilon$ by 
$\BB{f^*}{\varepsilon}$.}
\begin{equation}
\big(Ax + \lambda\J x\big) \cap \BB{f^*}{\varepsilon} \neq \varnothing
\label{e:tuesday2}
\end{equation}
and 
\begin{equation}
\varepsilon \leq \frac{2\lambda\|f^*\|}{\|A\|+3\lambda}.
\label{e:tuesday3}
\end{equation}
Then 
\begin{equation}
\scal{x}{f^*} \geq \lambda\left(\frac{\|f^*\|-\varepsilon}{\|A\|+\lambda}\right)^2
-\varepsilon\left(\frac{\|f^*\|-\varepsilon}{\|A\|+\lambda} \right). 
\label{e:tuesday4}
\end{equation}
\end{theorem}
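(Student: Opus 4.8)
The plan is to extract a concrete witness from the intersection hypothesis \eqref{e:tuesday2}, feed it into Lemma~\ref{l:over}, and then absorb the error incurred in passing from that witness to $f^*$ by means of a single one-variable quadratic estimate. Concretely, I would first choose $r^*\in(Ax+\lambda\J x)\cap\BB{f^*}{\varepsilon}$, so that $\|r^*-f^*\|\le\varepsilon$ and, by the reverse triangle inequality, $\|r^*\|\ge\|f^*\|-\varepsilon$. Observe that \eqref{e:tuesday3} forces $\varepsilon\le\|f^*\|$, since the factor $2\lambda/(\|A\|+3\lambda)$ is at most $1$; hence the quantity $t:=(\|f^*\|-\varepsilon)/(\|A\|+\lambda)$ appearing in \eqref{e:tuesday4} is nonnegative.

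Next I would apply Lemma~\ref{l:over} to this $r^*$. The first inequality in \eqref{e:over2} gives $\|x\|\ge\|r^*\|/(\|A\|+\lambda)\ge t$, while the second part of \eqref{e:over2} gives $\lambda\|x\|^2\le\scal{x}{r^*}$. To convert the pairing against $r^*$ into a pairing against $f^*$, I would split $\scal{x}{f^*}=\scal{x}{r^*}+\scal{x}{f^*-r^*}$ and bound the cross term using the duality estimate $\scal{x}{f^*-r^*}\ge-\|x\|\,\|f^*-r^*\|\ge-\varepsilon\|x\|$. Combining these two facts yields the clean intermediate bound $\scal{x}{f^*}\ge\lambda\|x\|^2-\varepsilon\|x\|$.

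The final and most delicate step is to replace the implicit quantity $\|x\|$ by the explicit bound $t$. Writing $g(u):=\lambda u^2-\varepsilon u$, I will have shown $\scal{x}{f^*}\ge g(\|x\|)$ with $\|x\|\ge t$, and the right-hand side of the target \eqref{e:tuesday4} is exactly $g(t)$. Since $g$ is an upward-opening parabola with vertex at $u=\varepsilon/(2\lambda)$, the desired inequality $g(\|x\|)\ge g(t)$ follows once $g$ is nondecreasing on $[t,\infty)$, that is, once $t\ge\varepsilon/(2\lambda)$. The crux is that this is not an additional assumption: a direct rearrangement shows $t\ge\varepsilon/(2\lambda)$ is equivalent to $2\lambda\|f^*\|\ge\varepsilon(\|A\|+3\lambda)$, which is precisely hypothesis \eqref{e:tuesday3}. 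I expect verifying this equivalence---thereby recognizing that \eqref{e:tuesday3} is calibrated exactly so that the quadratic $g$ is monotone beyond $t$---to be the one genuinely nonroutine point of the argument; the norm estimates feeding into it are immediate from Lemma~\ref{l:over}.
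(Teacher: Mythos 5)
Your proposal is correct and follows essentially the same route as the paper's own proof: both extract a witness $r^*$ from \eqref{e:tuesday2}, combine Lemma~\ref{l:over} with the bound $\scal{x}{r^*-f^*}\leq\varepsilon\|x\|$ to get $\scal{x}{f^*}\geq\lambda\|x\|^2-\varepsilon\|x\|$ with $\|x\|\geq(\|f^*\|-\varepsilon)/(\|A\|+\lambda)$, and then conclude via monotonicity of the quadratic $g(\eta)=\lambda\eta^2-\varepsilon\eta$ past its vertex, the threshold condition $t\geq\varepsilon/(2\lambda)$ being exactly \eqref{e:tuesday3}. Your observation that \eqref{e:tuesday3} is calibrated precisely to make $g$ nondecreasing beyond $t$ is the same pivotal point the paper verifies (in the equivalent form $\varepsilon\leq 2\lambda l$), and your derivation is if anything slightly leaner, since you skip the unused upper bounds $\|x\|\leq\|r^*\|/\lambda$ and $\|r^*\|\leq\|f^*\|+\varepsilon$.
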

\begin{proof}
Set $\varphi = \|f^*\|$ and let 
$r^* \in (Ax + \lambda\J x) \cap \BB{f^*}{\varepsilon}$. 
Combining this with \eqref{e:over2}, we obtain
\begin{equation}
\lambda\|x\|^2 \leq \scal{x}{r^*} = \scal{x}{r^*-f^*}+\scal{x}{f^*}
\leq \|x\|\varepsilon + \scal{x}{f^*}.
\label{e:tuesday6}
\end{equation}
Next, set $\rho = \|r^*\|$, $\alpha = \|A\|$, and $\xi = \|x\|$.
Then \eqref{e:tuesday6} and \eqref{e:over2} yield
\begin{equation}
\scal{x}{f^*} \geq \lambda\xi^2 - \varepsilon\xi,
\quad\text{where}\quad
\frac{\rho}{\alpha+\lambda} \leq \xi \leq \frac{\rho}{\lambda}. 
\label{e:tuesday8}
\end{equation}
Note that 
$\|f^*\|-\|f^*-r^*\| \leq \|r^*\| \leq \|f^*\| + \|r^*-f^*\|$ which implies 
\begin{equation}
\varphi - \varepsilon \leq \rho \leq \varphi + \varepsilon.
\label{e:tuesday10}
\end{equation}
Now, set $l=(\varphi-\varepsilon)/(\alpha+\lambda)\geq 0$.
On the one hand, 
simple algebraic manipulations show that \eqref{e:tuesday3} is equivalent to
$\varphi/(\alpha+3\lambda)\le l$; 
consequently, $2\lambda\varphi/(\alpha+3\lambda)\le 2\lambda l$.
On the other hand, \eqref{e:tuesday3} states that 
$\varepsilon \leq 2\lambda\varphi/(\alpha+3\lambda)$. Altogether, 
\begin{equation}
\label{e:July:1}
\varepsilon\le 2\lambda l.
\end{equation}
Furthermore, \eqref{e:tuesday8} and \eqref{e:tuesday10} yield
$\xi \geq \rho/(\alpha+\lambda)\geq (\varphi-\varepsilon)/(\alpha+\lambda) = l$;
hence, 
\begin{equation}
\label{e:July:2}
l\le \xi.
\end{equation}
Next, consider the function $g(\eta)=\lambda\eta^2 - \varepsilon\eta$.
Let $\eta \ge l $ 
and observe that \eqref{e:July:1} implies that
$g'(\eta)=2\lambda\eta - \varepsilon\ge 2\lambda l- \varepsilon\ge 0 $. 
Hence, $g$
is increasing on $\left[l,+\infty\right [$.
Combining \eqref{e:tuesday8} and 
\eqref{e:July:2}, we learn that $\scal{x}{f^*}\ge g(\xi)\ge g(l)$.
Hence $\scal{x}{f^*} \geq g(l)$, which is precisely \eqref{e:tuesday4}. 
\end{proof}

\begin{definition}[\textbf{\whs condition}]
Let $A\colon X\to X^*$ be a bounded linear monotone operator,
and let $f^*\in X^*$.
We say that the \emph{\whs condition}\footnote{``whs'' stands for \emph{wondrous half-space}.} holds if
$\|A\|=1$, $\|f^*\|=1$, and 
for every $\lambda>0$ and every $\varepsilon >0$, the implication
\begin{equation}
(Ax + \lambda \J x)\cap\BB{f^*}{\varepsilon} \neq\varnothing
\quad\Rightarrow\quad
\scal{x}{f^*} \leq 3\varepsilon
\end{equation}
is true. 
\end{definition}

For every $\lambda>0$, we define for future convenience
\begin{empheq}[box=\mybluebox]{equation}
\label{e:defofm}
m(\lambda) = \frac{3\lambda^2+9\lambda+4-(\lambda+1)\sqrt{9\lambda^2+36\lambda+16}}{4\lambda+2}. 
\end{empheq}
The function $m$ plays a role in the statement and proofs of this section. 
The next result will also establish the strict positivity\footnote{In passing, 
we note that 
$\lim_{\lambda\to 0^+} m(\lambda) = 0$, 
$\lim_{\lambda\to +\infty} m(\lambda) = 0$,
$m$ is strictly increasing on $[0,\bar{\lambda}]$
and strictly decreasing on $\left[\bar{\lambda},+\infty\right[$,
where $\bar{\lambda} = (3+\sqrt{15})/6 \approx 1.14550$,
and $m(\bar{\lambda}) = (9-2\sqrt{15})/(12+2\sqrt{15})\approx 0.06349$.
We omit the proofs as these properties are not needed in this paper.} 
of $m$.

\begin{corollary}
\label{c:tuesday}
Let $A\colon X\to X^*$ be a bounded linear monotone operator with $\|A\|=1$, 
let $f^*\in X^*$ be such that $\|f^*\|=1$,
let $\lambda > 0$ and assume that there exist $x\in X$ and 
$\varepsilon > 0$ such that $\varepsilon \leq 2\lambda/(1+3\lambda)$ and 
$(Ax + \lambda \J x)\cap\BB{f^*}{\varepsilon} \neq\varnothing$.
Finally, assume that the \whs condition holds.
Then 
\begin{equation}
(2\lambda+1)\varepsilon^2 -(3\lambda^2+9\lambda+4)\varepsilon +\lambda\leq 0;
\label{e:tuesday32}
\end{equation}
consequently,
\begin{equation}
\varepsilon \geq \frac{3\lambda^2+9\lambda+4-(\lambda+1)\sqrt{9\lambda^2+36\lambda+16}}{4\lambda+2} = m(\lambda)> 0.
\label{e:tuesday34}
\end{equation}
\end{corollary}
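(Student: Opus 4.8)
The plan is to squeeze $\scal{x}{f^*}$ between a quadratic lower bound supplied by Theorem~\ref{t:tuesday} and the linear upper bound $3\varepsilon$ supplied by the \whs condition, and then to read off \eqref{e:tuesday32} as the resulting quadratic inequality in $\varepsilon$.

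First I would invoke Theorem~\ref{t:tuesday}. The standing hypotheses $\|A\|=1$ and $\|f^*\|=1$, together with the assumptions $(Ax+\lambda\J x)\cap\BB{f^*}{\varepsilon}\neq\varnothing$ and $\varepsilon\le 2\lambda/(1+3\lambda)$, are precisely \eqref{e:tuesday2} and \eqref{e:tuesday3} in this normalized setting. Hence \eqref{e:tuesday4} applies, and writing $l := (1-\varepsilon)/(1+\lambda)$ it reads $\scal{x}{f^*}\ge \lambda l^2-\varepsilon l$. On the other hand, since the \whs condition is assumed and the nonempty-intersection hypothesis holds, its defining implication yields $\scal{x}{f^*}\le 3\varepsilon$. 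Chaining the two bounds gives $\lambda l^2-\varepsilon l\le 3\varepsilon$.

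Next I would clear denominators by multiplying through by $(1+\lambda)^2$ and expanding. A direct computation turns $\lambda(1-\varepsilon)^2-\varepsilon(1-\varepsilon)(1+\lambda)\le 3\varepsilon(1+\lambda)^2$ into $(2\lambda+1)\varepsilon^2-(3\lambda+1)\varepsilon+\lambda\le (3\lambda^2+6\lambda+3)\varepsilon$, and collecting the linear term on the left produces exactly \eqref{e:tuesday32}.

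Finally, for the consequence \eqref{e:tuesday34} I would treat \eqref{e:tuesday32} as a quadratic inequality $q(\varepsilon)\le 0$ with positive leading coefficient $2\lambda+1$, so its solution set is the closed interval between the two roots; in particular $\varepsilon$ is at least the smaller root. The only genuine computation is the discriminant factorization $(3\lambda^2+9\lambda+4)^2-4\lambda(2\lambda+1)=(\lambda+1)^2(9\lambda^2+36\lambda+16)$, which identifies the smaller root as $m(\lambda)$ and hence gives $\varepsilon\ge m(\lambda)$. Strict positivity of $m(\lambda)$ then follows either from Vieta's formulas — the product $\lambda/(2\lambda+1)$ and sum $(3\lambda^2+9\lambda+4)/(2\lambda+1)$ of the roots are both positive for $\lambda>0$, so both roots are positive — or directly from the fact that the subtracted term $4\lambda(2\lambda+1)$ is strictly positive, which is equivalent to $3\lambda^2+9\lambda+4>(\lambda+1)\sqrt{9\lambda^2+36\lambda+16}$. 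The main (and essentially only) obstacle is verifying that discriminant factorization; everything else is bookkeeping.
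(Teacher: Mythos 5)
Your proposal is correct and takes essentially the same route as the paper's proof: both squeeze $\scal{x}{f^*}$ between the quadratic lower bound of Theorem~\ref{t:tuesday} (specialized to $\|A\|=\|f^*\|=1$) and the upper bound $3\varepsilon$ from the \whs condition, expand to obtain \eqref{e:tuesday32}, and then extract \eqref{e:tuesday34} as the smaller root of the quadratic. Your algebra, including the discriminant factorization $(3\lambda^2+9\lambda+4)^2-4\lambda(2\lambda+1)=(\lambda+1)^2(9\lambda^2+36\lambda+16)$, checks out, and your Vieta argument merely makes explicit the positivity of both roots that the paper asserts without detail.
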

\begin{proof}
The \whs condition implies that $\scal{x}{f^*} \leq 3\varepsilon$. 
On the other hand, \eqref{e:tuesday4} yields
$\scal{x}{f^*} \geq \lambda (1-\varepsilon)^2/(1+\lambda)^2 - \varepsilon(1-\varepsilon)/(1+\lambda)$. 
Altogether,
\begin{equation}
\lambda \frac{(1-\varepsilon)^2}{(1+\lambda)^2} - \varepsilon\frac{1-\varepsilon}{1+\lambda} \leq 3\varepsilon. 
\label{e:tuesday30}
\end{equation}
In turn, \eqref{e:tuesday30} is equivalent to
\begin{subequations}
\begin{align}
&\qquad\lambda(1-\varepsilon)^2 - \varepsilon(1-\varepsilon)(1+\lambda) \leq 3\varepsilon(1+\lambda)^2
\\
&\Leftrightarrow
(\lambda + \varepsilon^2\lambda - 2\lambda\varepsilon )
+(-\varepsilon -\varepsilon\lambda+\varepsilon^2+\varepsilon^2\lambda)
\leq 3\varepsilon + 3\varepsilon\lambda^2+6\varepsilon\lambda
\\
&\Leftrightarrow
\varepsilon^2(1+2\lambda) +\varepsilon(-9\lambda-4-3\lambda^2) + \lambda \leq 0,
\end{align}
\end{subequations}
which is equivalent to \eqref{e:tuesday32}.
Now let's view \eqref{e:tuesday32} as a quadratic inequality.
Then $\varepsilon$ must lie in the closed interval given by the roots of the 
corresponding quadratic equation
\begin{equation}
(2\lambda+1)\xi^2 -(3\lambda^2+9\lambda+4)\xi +\lambda = 0,
\label{e:tuesday36}
\end{equation}
where the variable is $\xi$. The two roots of \eqref{e:tuesday36} are
\begin{subequations}
\begin{align}
&\;\quad \frac{3\lambda^2+9\lambda+4\pm \sqrt{(3\lambda^2+9\lambda+4)^2-4(1+2\lambda)\lambda}}{2(1+2\lambda)}\\
&= \frac{3\lambda^2+9\lambda+4\pm (\lambda+1)\sqrt{9\lambda^2+36\lambda+16}}{4\lambda+2}, 
\end{align}
\end{subequations}
and both roots are positive.
\end{proof}

\begin{theorem}[\textbf{key result}]
\label{t:tuesdaypm}
Let $A\colon X\to X^*$ be a bounded linear monotone operator with $\|A\|=1$, 
let $f^*\in X^*$ be such that $\|f^*\|=1$,
and let $\lambda > 0$. 
Furthermore, assume the \whs condition holds. 
Then either 
\begin{equation}
d\big(f^*,\ran(A+\lambda\J)\big)\geq \frac{2\lambda}{1+3\lambda},
\end{equation}
or 
\begin{equation}
\frac{2\lambda}{1+3\lambda} > d\big(f^*,\ran(A+\lambda\J)\big)
\geq 
\frac{3\lambda^2+9\lambda+4-(\lambda+1)\sqrt{9\lambda^2+36\lambda+16}}{4\lambda+2} > 0.
\end{equation}
In any case,
\begin{equation}
d\big(f^*,\ran(A+\lambda\J)\big)\geq 
\frac{3\lambda^2+9\lambda+4-(\lambda+1)\sqrt{9\lambda^2+36\lambda+16}}{4\lambda+2} > 0.
\end{equation}
\end{theorem}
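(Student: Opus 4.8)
The plan is to recognize the theorem as nothing more than a reformulation of Corollary~\ref{c:tuesday} in terms of the distance $d := d\big(f^*,\ran(A+\lambda\J)\big)$, supplemented by one elementary inequality between the two thresholds. Note first that $\ran(A+\lambda\J)\neq\varnothing$ (e.g.\ $0$ lies in it, since $\J 0 = \{0\}$), so $d$ is a well-defined number in $[0,1]$. The stated dichotomy is then purely a case split on whether $d\geq \frac{2\lambda}{1+3\lambda}$ or not; its entire mathematical content is the uniform lower bound $d\geq m(\lambda)>0$ of the last display. So I would prove that bound and read the dichotomy off at the end.

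The core step is the contrapositive of Corollary~\ref{c:tuesday}. By definition of the distance to a set, for every $\varepsilon>d$ there is $x\in X$ with some $y^*\in (Ax+\lambda\J x)$ satisfying $\|f^*-y^*\|\leq\varepsilon$, i.e.\ $(Ax+\lambda\J x)\cap\BB{f^*}{\varepsilon}\neq\varnothing$. I would argue by contradiction: if $d<m(\lambda)$, pick $\varepsilon$ with $d<\varepsilon<m(\lambda)$. Using the inequality $m(\lambda)\leq\frac{2\lambda}{1+3\lambda}$ recorded below, this $\varepsilon$ satisfies $0<\varepsilon\leq\frac{2\lambda}{1+3\lambda}$ and produces the nonempty intersection above, so Corollary~\ref{c:tuesday} forces $\varepsilon\geq m(\lambda)$, contradicting $\varepsilon<m(\lambda)$. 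Hence $d\geq m(\lambda)$, and since $m(\lambda)>0$ the final display holds. The dichotomy is then immediate: if $d\geq\frac{2\lambda}{1+3\lambda}$ we are in the first alternative, and otherwise $\frac{2\lambda}{1+3\lambda}>d\geq m(\lambda)>0$ is exactly the second.

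The one genuinely new ingredient — and the only computation — is the inequality $m(\lambda)\leq\frac{2\lambda}{1+3\lambda}$, which is what guarantees the chosen $\varepsilon$ lands in the admissible range of the corollary; I expect this to be the main (if modest) obstacle. I would establish it by recalling that $m(\lambda)$ is the smaller root of the upward-opening quadratic $p(\xi)=(2\lambda+1)\xi^2-(3\lambda^2+9\lambda+4)\xi+\lambda$ of \eqref{e:tuesday36}, and evaluating $p$ at $\xi=\frac{2\lambda}{1+3\lambda}$. Clearing the denominator gives $(1+3\lambda)^2\,p\big(\tfrac{2\lambda}{1+3\lambda}\big)=-\lambda\big(18\lambda^3+43\lambda^2+32\lambda+7\big)<0$, so $\frac{2\lambda}{1+3\lambda}$ lies strictly between the two roots and in particular exceeds the smaller root $m(\lambda)$; this yields the required $m(\lambda)<\frac{2\lambda}{1+3\lambda}$.
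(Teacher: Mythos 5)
Your proof is correct, and it leans on the same key ingredient as the paper --- Corollary~\ref{c:tuesday} plus the comparison $m(\lambda)<\tfrac{2\lambda}{1+3\lambda}$ --- but it organizes and executes these differently, and in two respects more cleanly. First, the paper splits into cases up front: assuming $d\big(f^*,\ran(A+\lambda\J)\big)<\tfrac{2\lambda}{1+3\lambda}$, it takes a minimizing sequence $\varepsilon_n\downarrow d$ with each $\varepsilon_n\le\tfrac{2\lambda}{1+3\lambda}$, applies Corollary~\ref{c:tuesday} to each $\varepsilon_n$, and passes to the limit; it then needs the inequality $\tfrac{2\lambda}{1+3\lambda}>m(\lambda)$ separately so that the ``in any case'' bound also holds under the first alternative. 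You instead prove the unconditional bound $d\ge m(\lambda)$ directly by contradiction with a single $\varepsilon\in\left]d,m(\lambda)\right[$, using $m(\lambda)<\tfrac{2\lambda}{1+3\lambda}$ to ensure that $\varepsilon$ is admissible for the corollary, and then read off the dichotomy as a trivial case split --- a tidier logical structure that makes plain that the dichotomy carries no extra content. Second, and more substantively, your verification of $m(\lambda)<\tfrac{2\lambda}{1+3\lambda}$ avoids the paper's ``straightforward but a bit tedious'' manipulation \eqref{e:wolfram}, which rationalizes the difference and then squares to show $\tau=(3\lambda+1)\sqrt{9\lambda^2+36\lambda+16}-(9\lambda^2+13\lambda+4)>0$; you simply evaluate the quadratic $p(\xi)=(2\lambda+1)\xi^2-(3\lambda^2+9\lambda+4)\xi+\lambda$ of \eqref{e:tuesday36} at $\xi=\tfrac{2\lambda}{1+3\lambda}$, and your computation $(1+3\lambda)^2\,p\big(\tfrac{2\lambda}{1+3\lambda}\big)=-\lambda\big(18\lambda^3+43\lambda^2+32\lambda+7\big)<0$ checks out, placing $\tfrac{2\lambda}{1+3\lambda}$ strictly between the two (real, positive) roots and hence above the smaller root $m(\lambda)$. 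This sign-of-the-quadratic trick never touches the square root at all, which is a genuine (if modest) simplification over the paper's algebra; what the paper's route buys in exchange is an explicit closed-form identity for the gap $\tfrac{2\lambda}{1+3\lambda}-m(\lambda)$, which your argument does not produce.
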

\begin{proof}
Assume that the ``either'' alternative fails, i.e., 
$d(f^*,\ran(A+\lambda\J))< 2\lambda/(1+3\lambda)$. 
Then there exist $x_n\in X$ and $x_n^*\in \J x_n$ such that 
$\varepsilon_n := \|f^*-(Ax_n+\lambda x_n^*)\| \leq 2\lambda/(1+3\lambda)$ and
$\varepsilon_n \downarrow d(f^*,\ran(A+\lambda\J))$. 
Now \eqref{e:tuesday34} of Corollary~\ref{c:tuesday} yields 
\begin{equation}
\varepsilon_n\geq 
\frac{3\lambda^2+9\lambda+4-(\lambda+1)\sqrt{9\lambda^2+36\lambda+16}}{4\lambda+2} > 0.
\end{equation}
Hence the ``or'' case follows by letting $n\to+\infty$. 
Finally, we claim that 
\begin{equation}
\frac{2\lambda}{1+3\lambda} > \frac{3\lambda^2+9\lambda+4-(\lambda+1)\sqrt{9\lambda^2+36\lambda+16}}{4\lambda+2}.
\end{equation}
It is straightforward but a bit tedious to verify that 
\begin{equation}
\label{e:wolfram}
\frac{2\lambda}{1+3\lambda} - \frac{3\lambda^2+9\lambda+4-(\lambda+1)\sqrt{9\lambda^2+36\lambda+16}}{4\lambda+2}
= \frac{1+\lambda}{2(1+2\lambda)(1+3\lambda)}\cdot\tau,
\end{equation}
where
\begin{equation}
\tau = (3\lambda+1)\sqrt{9\lambda^2+36\lambda+16} - \big(9\lambda^2+13\lambda+4\big).
\end{equation}
It remains to show that  $\tau> 0$. 
Indeed,
\begin{subequations}
\begin{align}
\tau> 0
&\Leftrightarrow
(3\lambda+1)^2(9\lambda^2+36\lambda+16) > (9\lambda^2+13\lambda+4)^2
\\
&\Leftrightarrow
(3\lambda+1)^2(9\lambda^2+36\lambda+16) - (9\lambda^2+13\lambda+4)^2 
= 144\lambda^3+128\lambda^2+28\lambda > 0.
\end{align}
\end{subequations}
\end{proof}

We are now ready for our abstract main result.

\begin{corollary}[\textbf{main result}]
\label{c:main}
Suppose that $X$ is rugged, 
let $A\colon X\to X^*$ be a bounded linear monotone operator
with $\|A\|=1$, let $f^*\in X^*$ be such that $\|f^*\|=1$,
and assume that the \whs condition holds. 
Let $\lambda > 0$, 
set $R = \cran(A+\lambda\J )$, and let $\alpha\in\RR\smallsetminus\{0\}$.
Then $R$ is \emph{not convex}, $\cconv R = X^*$, and 
\begin{equation}
d(\alpha f^*,R) \geq |\alpha|m(\lambda)>0,
\end{equation}
where 
\begin{equation}
m(\lambda) = \frac{3\lambda^2+9\lambda+4-(\lambda+1)\sqrt{9\lambda^2+36\lambda+16}}{4\lambda+2} > 0. 
\end{equation}
\end{corollary}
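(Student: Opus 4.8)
The plan is to assemble all three assertions from results already in hand, so the work is almost entirely bookkeeping. I would begin with the easiest claim, $\cconv R = X^*$. Since forming the closed convex hull is unaffected by first taking a closure, $\cconv R = \cconv\cran(A+\lambda\J) = \cconv\ran(A+\lambda\J)$, and Proposition~\ref{p:Shawn} (which needs only that $X$ is rugged, $A$ is linear, and $\lambda>0$) gives $\cconv\ran(A+\lambda\J)=X^*$ directly.

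Next I would prove the distance estimate, which is the one place where a genuine, if brief, argument is required in order to bring in the scalar $\alpha$. The structural fact to isolate is that $\ran(A+\lambda\J)$ is a balanced cone: $A$ is linear and, reading off \eqref{e:J}, the duality map is homogeneous in the sense that $\J(tx)=t\J x$ for every $t\in\RR$, so $(A+\lambda\J)(tx)=t\,(A+\lambda\J)(x)$ and hence $t\,\ran(A+\lambda\J)\subseteq\ran(A+\lambda\J)$ for all $t\in\RR$. Given $\alpha\neq 0$, every $y^*\in\ran(A+\lambda\J)$ may therefore be written as $y^*=\alpha z^*$ with $z^*=y^*/\alpha\in\ran(A+\lambda\J)$, whence $\|\alpha f^*-y^*\|=|\alpha|\,\|f^*-z^*\|$; taking infima yields $d(\alpha f^*,\ran(A+\lambda\J))=|\alpha|\,d(f^*,\ran(A+\lambda\J))$. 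Because distance to a set coincides with distance to its closure, $d(\alpha f^*,R)=|\alpha|\,d(f^*,\ran(A+\lambda\J))$. Finally, the hypotheses here ($\|A\|=1$, $\|f^*\|=1$, and the \whs condition) are precisely those of Theorem~\ref{t:tuesdaypm}, which supplies $d(f^*,\ran(A+\lambda\J))\geq m(\lambda)>0$; multiplying by $|\alpha|$ gives the asserted $d(\alpha f^*,R)\geq |\alpha|m(\lambda)>0$, and in particular the strict positivity of $m(\lambda)$ is recorded.

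The non-convexity of $R$ then drops out by contradiction. Suppose $R$ were convex. Since $X$ is rugged, $A$ is linear, and $\lambda>0$, the implication \ref{c:thesis3}$\Rightarrow$\ref{c:thesis1} of Corollary~\ref{c:thesis} forces $R=X^*$. But specializing the distance estimate to $\alpha=1$ gives $d(f^*,R)\geq m(\lambda)>0$, while $d(f^*,X^*)=0$ --- a contradiction. Hence $R$ is not convex.

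I do not anticipate a serious obstacle: the corollary is essentially a repackaging of Proposition~\ref{p:Shawn}, Theorem~\ref{t:tuesdaypm}, and Corollary~\ref{c:thesis}. The only step demanding care is the homogeneity/balanced-cone observation that upgrades the estimate $d(f^*,\cdot)\geq m(\lambda)$ to the scaled estimate $d(\alpha f^*,\cdot)\geq|\alpha|m(\lambda)$; once that is in place, both the non-convexity of $R$ and $\cconv R=X^*$ follow by immediate citation.
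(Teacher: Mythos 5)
Your proposal is correct and follows essentially the same route as the paper's own proof: homogeneity of $A$ and $\J$ gives $\alpha R = R$ and hence $d(\alpha f^*,R)=|\alpha|\,d(f^*,R)\geq|\alpha| m(\lambda)$ via Theorem~\ref{t:tuesdaypm}, while non-convexity and $\cconv R = X^*$ follow from Corollary~\ref{c:thesis} and Proposition~\ref{p:Shawn}. The only difference is that you spell out the balanced-cone observation and the contradiction argument that the paper leaves implicit, which is fine.
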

\begin{proof}
Because $A$ and $\J$ are homogeneous,
it follows that $\alpha R = R$.
Hence
$d(\alpha f^*,R) = d(\alpha f^*,\alpha R) 
=|\alpha|d(f^*,R) \geq |\alpha|m(\lambda)$
by Theorem~\ref{t:tuesdaypm}.
The lack of convexity of $R$ and the fact that $\cconv R = X^*$ follow 
from Corollary~\ref{c:thesis} and Proposition~\ref{p:Shawn}, respectively.
\end{proof}

\section{The Gossez operator revisited}
\label{sec:Gossez}

In this section, we assume that 
\begin{empheq}[box=\mybluebox]{equation}
X = \ell_1,
\end{empheq}
and that 
\begin{empheq}[box=\mybluebox]{equation}
G \colon \ell_1\to\ell_\infty \colon x=(x_n)_{\nnn} \mapsto (Gx)_\nnn, 
\quad \text{where $(Gx)_n = -\sum_{k<n}x_k + \sum_{k>n}x_k$}
\end{empheq}
is the Gossez operator \cite{Gossez72} and \cite{Gossez76}.
It is easy to see that $G$ is a bounded linear operator 
with $\|G\|=1$ and that $G$ is skew, hence monotone. 

We are now ready for the main result concerning
the Gossez operator.

\begin{theorem}
\label{t:Gossez}
The Gossez operator $G$ satisfies the \whs condition with
$f^*=-(1,1,\ldots)\in\ell_\infty$; consequently, 
$(\forall \lambda>0)$
$\cran(G+\lambda\J )$ is not convex yet $\cconv\cran(G+\lambda\J)=\ell_\infty$. 
\end{theorem}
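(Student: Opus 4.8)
The plan is to first reduce the two ``consequently'' assertions to the \whs condition, and then verify that condition directly. Since $\ell_1$ is rugged by Example~\ref{ex:rugged}, since $G$ is a bounded linear monotone operator with $\norm{G}=1$, and since $f^*=-(1,1,\ldots)$ satisfies $\norm{f^*}_\infty=1$, the main result Corollary~\ref{c:main} applies verbatim once the \whs condition is in hand: for every $\lambda>0$ it immediately yields that $\cran(G+\lambda\J)$ is not convex while $\cconv\cran(G+\lambda\J)=\ell_\infty=X^*$. Thus the entire content of the theorem is the \whs condition itself.

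To verify it, the key observation I would exploit is that $f^*$ encodes the \emph{value at infinity} of $G$: for $x=(x_n)_\nnn\in\ell_1$ one has $\scal{x}{f^*}=-\sum_{\nnn}x_n$, while $(Gx)_n=\sum_{k>n}x_k-\sum_{k<n}x_k\to -\sum_{\nnn}x_n$ as $n\to\infty$, because the head $\sum_{k<n}x_k$ converges to $\sum_{\nnn}x_n$ and the tail $\sum_{k>n}x_k$ vanishes. Hence $\scal{x}{f^*}=\lim_{n\to\infty}(Gx)_n$, and it suffices to bound $\limsup_{n}(Gx)_n$ from above by $3\varepsilon$ (in fact I expect to reach $2\varepsilon$, leaving slack).

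Now fix $\lambda>0$, $\varepsilon>0$, and $x^*\in\J x$ with $r^*:=Gx+\lambda x^*\in\BB{f^*}{\varepsilon}$. First, $\norm{r^*}_\infty\le\norm{f^*}_\infty+\varepsilon=1+\varepsilon$, so the upper estimate in \eqref{e:over2} of Lemma~\ref{l:over} gives $\norm{x}_1\le\norm{r^*}_\infty/\lambda\le(1+\varepsilon)/\lambda$, that is, $\lambda\norm{x}_1\le 1+\varepsilon$. Since $x^*\in\J x$ forces $\norm{x^*}_\infty=\norm{x}_1$ by \eqref{e:J}, every coordinate satisfies $\lambda\abs{x_n^*}\le\lambda\norm{x}_1\le 1+\varepsilon$. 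The ball membership means $\abs{(Gx)_n+\lambda x_n^*+1}\le\varepsilon$ for all $n$; taking only the upper half, $(Gx)_n\le -1+\varepsilon-\lambda x_n^*\le -1+\varepsilon+\lambda\abs{x_n^*}\le -1+\varepsilon+(1+\varepsilon)=2\varepsilon$ for every $n$. Letting $n\to\infty$ then yields $\scal{x}{f^*}=\lim_{n}(Gx)_n\le 2\varepsilon\le 3\varepsilon$, which is exactly the \whs implication.

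The only point demanding care, and the step at which a naive attempt stalls, is that the dual coordinates $x_n^*$ need neither converge nor settle into a fixed sign as $n\to\infty$; a direct case analysis on the sign pattern of $x$ becomes awkward, especially for finitely supported $x$ where $(Gx)_n$ is eventually constant. The proposal sidesteps this entirely: I never track individual $x_n^*$, but use only the \emph{uniform} bound $\abs{x_n^*}\le\norm{x}_1$ together with the norm estimate $\lambda\norm{x}_1\le 1+\varepsilon$ supplied by Lemma~\ref{l:over}. This is precisely what converts the per-coordinate inequality $(Gx)_n\le 2\varepsilon$ into a bound on the limit $\scal{x}{f^*}$, and the gap between the obtained $2\varepsilon$ and the required $3\varepsilon$ confirms the estimate is comfortably within tolerance.
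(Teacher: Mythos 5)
Your proposal is correct and takes essentially the same route as the paper's own proof: both arguments combine the coordinatewise ball estimate $|r_n^*+1|\le\varepsilon$, the bound $\lambda\|x\|_1=\lambda\|x^*\|_\infty\le\|r^*\|_\infty\le 1+\varepsilon$ coming from \eqref{e:J} and \eqref{e:over2} of Lemma~\ref{l:over}, and the limit identity $\lim_{n\to\infty}(Gx)_n=\scal{x}{f^*}$, and then conclude via Example~\ref{ex:rugged}\ref{ex:rugged:i} and Corollary~\ref{c:main}. The only difference is cosmetic: you bound $(Gx)_n\le 2\varepsilon$ uniformly in $n$ and pass to the limit exactly, whereas the paper works with $\lambda x_n^*-r_n^*$ for $n$ ``sufficiently large'' and pays an extra approximation $\varepsilon$, arriving at $3\varepsilon$; both bounds satisfy the \whs condition.
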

\begin{proof}
Let $\lambda>0$, and 
let $\varepsilon > 0$.
Assume that $(x,r^*)\in \ell_1\times\ell_\infty$ satisfies
$r^* \in (Gx+\lambda\J x)\cap \BB{f^*}{\varepsilon}$.
First, there exists $x^*\in\J x$ such that $r^*=Gx+\lambda x^*$.
By definition of $G$, we have $(\forall\nnn)$
$r_n^* = -\sum_{k<n}x_k + \sum_{k>n}x_k+\lambda x_n^*$. 
Letting $n\to\infty$, we deduce that 
\begin{equation}
\label{e:0903a}
\lim_{n\to\infty} (\lambda x_n^*-r_n^*) = \sum_{k=1}^\infty x_k = \scal{x}{-f^*} =: \sigma. 
\end{equation}
Second, because $r^*\in\BB{f^*}{\varepsilon}$, 
we have $\|r^*-f^*\|\leq\varepsilon$. Thus 
$(\forall\nnn)$ $|r_n^*+1|\leq\varepsilon$ 
and so $r_n^* \leq -1+\varepsilon$. 
Altogether, for all $n$ \emph{sufficiently large} we have 
$\lambda x_n^*-r_n^* \leq \sigma + \varepsilon$ 
\begin{equation}
\lambda x_n^* =(\lambda x_n^*-r_n^*) + r_n^* 
\leq (\sigma+\varepsilon)+(-1+\varepsilon) = \sigma+2\varepsilon-1. 
\label{e:wednesday10}
\end{equation}
Using \eqref{e:J} and \eqref{e:over2}, we estimate
\begin{equation}
-\lambda x_n^* \leq \lambda\|x^*\| = \lambda \|x\| \leq \|r^*\|
\leq \|r^*-f^*\| + \|f^*\| \leq \varepsilon + 1. 
\label{e:wednesday12}
\end{equation}
Adding 
\eqref{e:wednesday10} and \eqref{e:wednesday12},
we obtain 
$0\leq \sigma+3\varepsilon$.
Recalling the definition of $\sigma$ from \eqref{e:0903a}, 
deduce that 
$\scal{x}{f^*}\leq 3\varepsilon$ and the \whs condition holds.
The conclusion now follows by applying 
Example~\ref{ex:rugged}\ref{ex:rugged:i} and 
Corollary~\ref{c:main}.
\end{proof}

\begin{remark}[\textbf{a negative answer to a question posed by Stephen Simons}]
In \cite{Simonsnew}, Stephen Simons
proved that $\cran(G+\lambda\J )$ is not convex
for $0<\lambda<4$. In \cite[Problem~3.6]{Simonsnew}
he asks whether $\cran(G+4\J )$ is convex.
Theorem~\ref{t:Gossez} not only provides a negative
answer but also establishes,
for \emph{every} $\lambda>0$, 
the nonconvexity of $\cran(G+\lambda\J )$. 
\end{remark}

\begin{remark}[\textbf{the negative Gossez operator}]
The \emph{negative Gossez operator}, $-G$, is much better behaved than $G$:
indeed, combining \cite[Example~14.2.2 and Theorem~15.3.7]{BauThesis}, 
we deduce that $(\forall \lambda>0)$ $\cran(-G+\lambda\J) = \ell_\infty$.
(See also \cite[Example~5.2]{BauBor99}.)
\end{remark}

\section{The Fitzpatrick-Phelps operator revisited}
\label{sec:FP}

In this section, we assume that 
\begin{empheq}[box=\mybluebox]{equation}
X = L_1[0,1]
\end{empheq}
and that 
\begin{empheq}[box=\mybluebox]{equation}
F \colon L_1[0,1]\to L_\infty[0,1] \colon x \mapsto Fx,
\quad \text{where $(Fx)(t) = \int_{0}^t x(s)\,ds -\int_{t}^1x(s)\,ds$.}
\end{empheq}
It is easy to see that $F$ is a bounded linear operator with $\|F\|=1$
and that $F$ is skew, hence monotone. 

We are now ready for the main result concerning the Fitzpatrick-Phelps operator.

\begin{theorem}
\label{t:FP}
The Fitzpatrick-Phelps operator $F$ satisfies the \whs condition with
$f^*\equiv -1 \in L_\infty[0,1]$; 
consequently, $(\forall \lambda>0)$ 
$\cran(F+\lambda\J )$ is not convex yet $\cconv\cran(F+\lambda\J) = L_\infty[0,1]$. 
\end{theorem}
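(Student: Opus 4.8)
The plan is to mirror the proof of Theorem~\ref{t:Gossez}, exploiting the formal analogy between the partial sums defining $G$ and the running integrals defining $F$. First I would fix $\lambda>0$ and $\varepsilon>0$, take a point $(x,r^*)\in L_1[0,1]\times L_\infty[0,1]$ with $r^*\in(Fx+\lambda\J x)\cap\BB{f^*}{\varepsilon}$, and record a representative $x^*\in\J x$ with $r^*=Fx+\lambda x^*$. The role previously played by ``letting $n\to\infty$'' is now played by ``letting $t\to 0^+$'': since $(Fx)(t)=\int_0^t x(s)\,ds-\int_t^1 x(s)\,ds$, we have $\lambda x^*(t)-r^*(t)=-(Fx)(t)$ a.e., and as $t\to 0^+$ the right-hand side tends to $\int_0^1 x(s)\,ds=\scal{x}{-f^*}=:\sigma$, using $f^*\equiv -1$.

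Next I would extract the two one-sided estimates exactly as in the Gossez argument. From $\|r^*-f^*\|_\infty\le\varepsilon$ and $f^*\equiv -1$ we get $r^*(t)\le -1+\varepsilon$ for a.e.\ $t$; combining this with the displayed limit shows that for all sufficiently small $t$ one has $\lambda x^*(t)\le\sigma+2\varepsilon-1$. On the other hand, \eqref{e:J} together with the norm bound in \eqref{e:over2} give $-\lambda x^*(t)\le\lambda\|x^*\|_\infty=\lambda\|x\|_1\le\|r^*\|\le 1+\varepsilon$. Adding the two inequalities cancels $\lambda x^*(t)$ and yields $0\le\sigma+3\varepsilon$, i.e.\ $\scal{x}{f^*}=-\sigma\le 3\varepsilon$. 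This is precisely the \whs condition, after which Example~\ref{ex:rugged}\ref{ex:rugged:v} (that $L_1[0,1]$ is rugged) and Corollary~\ref{c:main} deliver the nonconvexity of $\cran(F+\lambda\J)$ and the identity $\cconv\cran(F+\lambda\J)=L_\infty[0,1]$.

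The one genuinely new point, and the main obstacle, is the passage to the continuous setting, where $x^*$ and $r^*$ are defined only almost everywhere, so the clean pointwise limit of the sequence case is not immediately available. The fix is to observe that $t\mapsto\int_0^t x(s)\,ds$ is absolutely continuous, hence $-(Fx)=\lambda x^*-r^*$ agrees a.e.\ with a genuinely continuous function whose classical limit at $0$ equals $\sigma$; this lets me simultaneously evaluate $\sigma$ and guarantee that the bound $\lambda x^*(t)-r^*(t)\le\sigma+\varepsilon$ holds for \emph{every} $t$ in some right-neighborhood $(0,\delta)$. Intersecting with the a.e.\ bound $r^*(t)\le -1+\varepsilon$ leaves a set of positive measure on which both inequalities hold at once, and at any such point the addition step goes through verbatim. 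I expect no further difficulty, since each remaining inequality is the exact $L_1/L_\infty$ transcription of its $\ell_1/\ell_\infty$ counterpart.
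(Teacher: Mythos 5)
Your proposal is correct and follows essentially the same route as the paper's proof: both exploit the (absolute) continuity of $t\mapsto\int_0^t x(s)\,ds$ near $t=0$ to get the upper bound on $\lambda x^*(t)$ for small $t$, combine it with the a.e.\ bound $r^*(t)\le -1+\varepsilon$ and the norm estimate $-\lambda x^*(t)\le \lambda\|x\|\le\|r^*\|\le 1+\varepsilon$ from \eqref{e:J} and \eqref{e:over2}, add to obtain $\scal{x}{f^*}\le 3\varepsilon$, and then invoke Example~\ref{ex:rugged}\ref{ex:rugged:v} and Corollary~\ref{c:main}. Your explicit handling of the a.e.-versus-everywhere issue (intersecting the full-measure sets on a right-neighborhood of $0$) is a careful touch that the paper glosses over, but it is a refinement of the same argument, not a different one.
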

\begin{proof}
Let $\lambda>0$, and let $\varepsilon > 0$.
Assume that $(x,r^*)\in L_1[0,1]\times L_\infty[0,1]$ satisfies 
\begin{equation}
r^* \in (Fx+\lambda\J x)\cap \BB{f^*}{\varepsilon}.
\end{equation}
First, there exists $x^*\in\J x$ such that $r^*=Fx+\lambda x^*$.
By definition of $F$, $(\forall t\in[0,1])$ 
$r^*(t) = \int_{0}^{t}x(s)\,ds - \int_{t}^{1}x(s)\,ds + \lambda x^*(t)$.
Thus, for every $t\in[0,1]$, we have $|r^*(t)+1|\leq\varepsilon$ and also 
\begin{subequations}
\begin{align}
\lambda x^*(t)
&= r^*(t) - \int_{0}^{t} x(s)\,ds + \int_{t}^{1}x(s)\,ds 
= r^*(t)+\int_{0}^{1}x(s)\,ds - 2\int_{0}^{t}x(s)\,ds\\
&= r^*(t) - \scal{x}{f^*} - 2\int_{0}^{t}x(s)\,ds
\leq -1+\varepsilon -\scal{x}{f^*} - 2\int_{0}^{t}x(s)\,ds.
\end{align}
\end{subequations}
In view of the (absolute) continuity of $t\mapsto \int_{0}^{t} x(s) \,ds$ 
(see, e.g., \cite[Theorem~6.84]{Stromberg}), we have 
for all $t>0$ \emph{sufficiently small}, 
\begin{equation}
\lambda x^*(t) \leq - 1 - \scal{x}{f^*} + 2\varepsilon. 
\label{e:thursday10}
\end{equation}
Now note that --- 
using \eqref{e:J} and again \eqref{e:over2} --- we obtain 
\begin{equation}
-\lambda x^*(t) \leq 
\lambda\|x^*\|=\lambda\|x\|
\le \|r^*\|
\le \|r^*-f^*\| + \|f^*\|\leq \varepsilon + 1.
\label{e:thursday12L}
\end{equation}
Adding \eqref{e:thursday10} and \eqref{e:thursday12L}, 
we deduce that 
$0 \leq 3\varepsilon - \scal{x}{f^*}$
and the \whs condition  thus holds.
Finally, apply Example~\ref{ex:rugged}\ref{ex:rugged:v} 
and Corollary~\ref{c:main}.
\end{proof}

\begin{remark}
Fitzpatrick and Phelps (see \cite[Example~3.2]{FP}) showed directly that 
$\cran(F+1\J )$ is not convex. 
We extend their conclusion from $\lambda=1$ to any $\lambda>0$. 
We note that 
a referee pointed out to us that there is a gap
in the proof of the nonconvexity of $\cran(F+\J)$ in 
the second paragraph of \cite[page~64]{FP};
however, the work in this paper does not rely on their argument
and thus provides an alternative (and simpler) proof. 
\end{remark}

\begin{remark}[\textbf{the negative Fitzpatrick-Phelps operator}]
The \emph{negative Fitzpatrick-Phelps operator}, $-F$,
satisfies the \whs condition again with $f^*\equiv -1\in L_\infty[0,1]$.
(The proof is similar with the only difference being that in the derivation
of the counterpart of \eqref{e:thursday10}, we work with
$t$ sufficiently close to $1$ rather than to $0$.)
Consequently, $(\forall \lambda>0)$ $\cran(-F+\lambda\J)$ is not convex
yet $\cconv\cran(-F+\lambda\J)=L_\infty[0,1]$. 
\end{remark}

\section*{Acknowledgements}
We thank a referee for her/his pertinent comments and suggestions which resulted in 
significant simplifications in some of the proofs.
The research of HHB and XW was partially supported by Discovery Grants
of the Natural Sciences and Engineering Research Council of
Canada. 
The research of WMM was partially supported by 
the Natural Sciences and Engineering Research Council of
Canada Postdoctoral Fellowship.

% \small

\end{document}